\newtheorem{thm}{Theorem}[section]
\newtheorem{lem}[thm]{Lemma}
\newcommand{\forme}[1]{}
\newcommand{\Or}{{\mathbf O}^{\mathrm \theta}}
\newcommand{\cupdot}{\mathbin{\mathaccent\cdot\cup}}
\begin{document}
\begin{frontmatter}

\title{Isomorphism classes of association schemes induced by Hadamard matrices\tnoteref{fn1}}
\begin{abstract}
Every Hadamard matrix $H$ of order $n > 1$ induces a graph with $4n$ vertices, called the Hadamard graph $\Gamma(H)$ of $H$.
Since $\Gamma(H)$ is a distance-regular graph with diameter $4$, it induces a $4$-class association scheme $(\Omega, S)$ of order $4n$.
In this article we show a way to construct fission schemes of $(\Omega, S)$ under certain conditions, and for such a fission scheme we estimate the
number of isomorphism classes with the same intersection numbers as the fission scheme.
\end{abstract}

\author{Mitsugu Hirasaka}
\ead{hirasaka@pusan.ac.kr}

\author[]{Kijung Kim\corref{cor1}}
\ead{knukkj@pusan.ac.kr}

\author{Hyonju Yu}
\ead{lojs4110@pusan.ac.kr}

\cortext[cor1]{Corresponding author.}
\address{Department of Mathematics, Pusan National University,\\ Busan 609--735, Republic of Korea.}

\tnotetext[fn1]{The first author's research was supported by Basic Science Research Program through the National Research Foundation of Korea(NRF) funded by the Ministry of Education (2013R1A1A2012532). The second author's research was supported by Basic Science Research Program through the National Research Foundation of Korea(NRF) funded by the Ministry of Education (2013R1A1A2005349).}

\begin{keyword}
Hadamard matrix \sep association scheme.
\end{keyword}

\end{frontmatter}

\section{Introduction}\label{sec:intro}

According \cite[Theorem 1.8.1]{bcn} we denote by $\Gamma(H)$ the Hadamard graph induced by a Hadamard matrix $H$.
It is known that, for a Hadamard matrix $H$ of order $n > 1$, $\Gamma(H)$ is an antipodal bipartite distance-regular graph with $4n$ vertices and diameter $4$.
Also, \cite[page 58]{bcn} shows that every distance-regular graph of diameter $d$ induces a $d$-class association scheme whose relations are defined by
the distance function of the graph. Thus, $\Gamma(H)$ induces an association scheme $(\Omega, S)$ such that $\Omega$ is the vertex set of $\Gamma(H)$
and $S = \{ s_i \mid 0 \leq i \leq 4 \}$, where $s_i = \{ (x,y) \in \Omega \times \Omega \mid d_{\Gamma(H)}(x,y) = i \}$.
Now we consider a fission scheme of $(\Omega, S)$ such that only $s_2$ is partitioned into $\{ t_1, t_2, \dotsc, t_m  \}$ and
\begin{equation}\label{A}
| xs_4 \cap yt_j | = 1 ~\text{for all}~  j=1,2, \dotsc, m,
\end{equation}
where $(y, x) \in t_j$ and $xs:=\{ y \mid (x, y) \in s \}$ for a binary relation $s$.

Since $s_0 \cup s_2 \cup s_4$ is an equivalence relation on $\Omega$ with exactly two equivalence classes, say $Y_1$ and $Y_2$, the restrictions of
$\{ s_0, s_4, t_1, t_2, \dotsc, t_m \}$ to $Y_i$ form the relation set of an association scheme for $i= 1, 2$.
In this article we deal with association schemes with the same intersection numbers as the fission scheme $(\Omega, \{t_j \mid 1 \leq j \leq m\} \cup \{s_0, s_1, s_3, s_4 \})$.
In the study of association schemes it has been one of the main topics to characterize association schemes, especially related to distance-regular
graphs by intersection numbers (see \cite[Chapter 9]{bcn} or \cite{bannai}), and many $P$- and $Q$- polynomial association schemes with large diameter are
uniquely determined by its intersection numbers: cf. \cite{bcn, ter, ega}. On the other hand, we can find quite many isomorphism classes of association schemes with the same intersection numbers as one can see in Table 3.
But, it does not guarantee that we can find such a huge number of association schemes of order $n$
with the same intersection numbers when $n$ is large enough.
In this article we prove that $(\Omega, \{t_j \mid 1 \leq j \leq m\} \cup \{s_0, s_1, s_3, s_4 \})$ is an association scheme
and give a lower bound for the number of isomorphism classes of association schemes with the same intersection numbers as $(\Omega, \{t_j \mid 1 \leq j \leq m\} \cup \{s_0, s_1, s_3, s_4 \})$.

\vskip10pt
The following are our main results which show the reason why so many isomorphism classes appear as mentioned above.

Let $(X, S)$ be an association scheme of order $n$ and $H$ a Hadamard matrix whose rows and columns are indexed by the elements of $X$.
We denote by $\mathbb{F}_2$ the finite field with two elements.
Also we denote by $x_{ab}$ an element $(x, a, b)$ of $X \times \mathbb{F}_2 \times \mathbb{F}_2$ for short.

Define
\begin{equation}\label{M}
\begin{array}{lll}
\widetilde{X}&=& \{ x_{ab} \mid x \in X, a, b \in \mathbb{F}_2  \}, \\
\widetilde{t}&=&\{(x_{ab}, x_{a(b+1)})\mid x \in X, a, b \in \mathbb{F}_2 \}, \\
\widetilde{s}&=&\{(x_{ab}, y_{ac}) \mid (x, y) \in s, a, b, c \in \mathbb{F}_2  \} \ \  \mbox{for}\ s \in S\setminus\{1_{X}\}, \\
r^{1}_H&= & \{(x_{ab}, y_{cd}) \mid  x, y \in X, a, b, c, d \in \mathbb{F}_2, (1- \delta_{ac})(H^{T(a)})_{xy} = (-1)^{b+d} \}, \\
r^{-1}_H&= & \{(x_{ab}, y_{cd}) \mid x, y \in X, a, b, c, d \in \mathbb{F}_2, a\neq c \} \setminus r^{1}_H, \\
S(H)&=& \{1_{\widetilde{X}}, \widetilde{t}\}\cup \{\widetilde{s}\mid s \in S\setminus\{1_{X}\}\}\cup \{r^{\epsilon}_H\mid\epsilon=\pm 1\},
\end{array}
\end{equation}

where $H^T$ is the transpose of a matrix $H$,

\[ \delta_{ac}   = \left\{
                      \begin{array}{ll}
                      1 & \hbox{if $a=c$;} \\
                      0 & \hbox{otherwise},
                      \end{array}
                     \right.\]

\[H^{T(a)} =  \left\{
                      \begin{array}{ll}
                      H & \hbox{if $a=0$;} \\
                      H^T & \hbox{if $a=1$.}
                      \end{array}
                     \right.\]

Note that $(\widetilde{X}, r^{1}_H)$ is the Hadamard graph $\Gamma(H)$.

\begin{thm}\label{thm:main1}
$(\widetilde{X}, S(H))$ is an association scheme.
\end{thm}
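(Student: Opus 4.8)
The plan is to verify the four defining properties of an association scheme for $(\widetilde{X},S(H))$: that $S(H)$ partitions $\widetilde{X}\times\widetilde{X}$, that $1_{\widetilde{X}}\in S(H)$, that $S(H)$ is closed under transposition, and that for all $p,q,r\in S(H)$ the number of $w\in\widetilde{X}$ with $(u,w)\in p$ and $(w,v)\in q$ is a constant $c_{pq}^{r}$ independent of the choice of $(u,v)\in r$. The organising observation is that the middle $\mathbb{F}_2$-coordinate of $x_{ab}$ behaves additively: say a relation has \emph{type $0$} if it relates only elements with equal middle coordinate (these are $1_{\widetilde{X}}$, $\widetilde{t}$, and the $\widetilde{s}$ with $s\in S\setminus\{1_X\}$) and \emph{type $1$} otherwise (these are $r^{1}_H$ and $r^{-1}_H$). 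If $(u,w)$ has type $i$ and $(w,v)$ has type $j$, then $(u,v)$ has type $i+j$ in $\mathbb{F}_2$; hence $c_{pq}^{r}=0$ unless the types of $p,q,r$ add up, and the verification splits into only three essentially different kinds of product.

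The partition and identity conditions are immediate: a pair $(x_{ab},y_{cd})$ with $a=c$ lies in $1_{\widetilde{X}}$, in $\widetilde{t}$, or in a unique $\widetilde{s}$ according as ($x=y$, $b=d$), ($x=y$, $b\neq d$), or $x\neq y$, using that $S$ partitions $X\times X$; a pair with $a\neq c$ lies in exactly one of $r^{1}_H$, $r^{-1}_H$, since $(H^{T(a)})_{xy}$ and $(-1)^{b+d}$ are both $\pm1$. For transposition, $1_{\widetilde{X}}$ and $\widetilde{t}$ are self-paired and $(\widetilde{s})^{*}=\widetilde{s^{*}}\in S(H)$; for $r^{\pm1}_H$ one uses the identity $(H^{T(c)})_{yx}=(H^{T(a)})_{xy}$, valid whenever $a\neq c$, together with $H^{T(c)}=(H^{T(a)})^{T}$, to conclude that $r^{1}_H$ and $r^{-1}_H$ are symmetric.

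For the intersection numbers I would translate everything into adjacency matrices and the tensor decomposition $\widetilde{X}=X\times\mathbb{F}_2\times\mathbb{F}_2$, writing $A_{1_{\widetilde{X}}}=I_X\otimes I_2\otimes I_2$, $A_{\widetilde{t}}=I_X\otimes I_2\otimes F$, $A_{\widetilde{s}}=A_s\otimes I_2\otimes J_2$ for $s\neq 1_X$, $G:=A_{r^{1}_H}+A_{r^{-1}_H}=J_X\otimes F\otimes J_2$, and $D:=A_{r^{1}_H}-A_{r^{-1}_H}=(H\otimes M_{01}+H^{T}\otimes M_{10})\otimes E$, where $F=J_2-I_2$, $E=2I_2-J_2$ is the matrix with $(b,d)$-entry $(-1)^{b+d}$, $J_2$ is all-ones, $M_{01},M_{10}$ are the off-diagonal $2\times2$ matrix units, and the three tensor slots carry $X$, the middle $\mathbb{F}_2$, and the last $\mathbb{F}_2$ (note $I_X\otimes I_2\otimes J_2=A_{1_{\widetilde{X}}}+A_{\widetilde{t}}$). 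It then suffices to show every product $A_pA_q$ is a nonnegative integer combination of the $A_r$. For two type-$0$ factors this follows from $(X,S)$ being a scheme ($A_sA_{s'}=\sum_{s''}c_{ss'}^{s''}A_{s''}$) together with the elementary identities $J_2^{2}=2J_2$, $F^{2}=I_2$, $FJ_2=J_2F=J_2$ on the last slot. For a type-$0$ factor against a type-$1$ factor one uses $A_sJ_X=n_sJ_X$ (with $n_s:=|xs|$), $EJ_2=J_2E=0$ and $FE=EF=-E$, which give $A_{\widetilde{s}}A_{r^{\pm1}_H}=n_sG$ and $A_{\widetilde{t}}A_{r^{1}_H}=A_{r^{-1}_H}$, $A_{\widetilde{t}}A_{r^{-1}_H}=A_{r^{1}_H}$.

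The main obstacle --- and the only place the Hadamard hypothesis is used --- is the product of two type-$1$ relations. Since $EJ_2=J_2E=0$, we have $GD=DG=0$, so $A_{r^{\epsilon}_H}A_{r^{\epsilon'}_H}=\tfrac14(G^{2}+\epsilon\epsilon'\,D^{2})$, and the computation reduces to $G^{2}$ and $D^{2}$. One finds $G^{2}=2n\,(J_X\otimes I_2\otimes J_2)=2n\big(A_{1_{\widetilde{X}}}+A_{\widetilde{t}}+\sum_{s\neq 1_X}A_{\widetilde{s}}\big)$, and --- using $M_{01}^{2}=M_{10}^{2}=0$, $M_{01}M_{10}=\mathrm{diag}(1,0)$, $M_{10}M_{01}=\mathrm{diag}(0,1)$, $E^{2}=2E$, and \emph{crucially} the Hadamard orthogonality $HH^{T}=H^{T}H=nI_X$ --- that $D^{2}=2n\,(I_X\otimes I_2\otimes E)=2n\big(A_{1_{\widetilde{X}}}-A_{\widetilde{t}}\big)$. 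Substituting gives
\[
A_{r^{1}_H}^{2}=A_{r^{-1}_H}^{2}=nA_{1_{\widetilde{X}}}+\tfrac n2\sum_{s\neq 1_X}A_{\widetilde{s}},\qquad A_{r^{1}_H}A_{r^{-1}_H}=A_{r^{-1}_H}A_{r^{1}_H}=nA_{\widetilde{t}}+\tfrac n2\sum_{s\neq 1_X}A_{\widetilde{s}},
\]
which are nonnegative integer combinations ($n$ is even). In coordinates, the orthogonality relation $\sum_z(H^{T(a)})_{xz}(H^{T(a)})_{yz}=n\delta_{xy}$ is exactly what forces, for $x\neq y$, the coefficient of each $A_{\widetilde{s}}$ to be the common value $n/2$, and for $x=y$ forces the diagonal contribution to be $n$ or $0$, thereby separating $1_{\widetilde{X}}$ from $\widetilde{t}$; without it the structure constant would depend on the $S$-class of $(x,y)$. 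Collecting the products over all three families yields every structure constant and shows that $(\widetilde{X},S(H))$ is an association scheme.
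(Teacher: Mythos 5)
Your proof is correct, and it takes a genuinely different route from the paper's. The paper does not verify the products of the relations $r^{\epsilon}_H$ at all: it invokes the known fact (\cite[Theorem 1.8.1]{bcn}) that $\Gamma(H)$ is a distance-regular graph of diameter $4$, so that $\{1_{\widetilde{X}}, r^1_H, S^{\sqcup}, r^{-1}_H, \widetilde{t}\}$ with $S^{\sqcup}=\bigcup_{s\neq 1_X}\widetilde{s}$ is already an association scheme, and then only checks — by directly counting the sets $Z$ and $Z'$ of intermediate points — that the products involving the refined relations $\widetilde{s}$ still lie in the span of $\{A_r \mid r\in S(H)\}$; the Hadamard orthogonality is thus hidden inside the cited distance-regularity and never appears explicitly. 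You instead verify everything from scratch via the tensor decomposition over $X\times\mathbb{F}_2\times\mathbb{F}_2$, with the $G,D$ basis and the identities $GD=DG=0$, $G^2=2n(J_X\otimes I_2\otimes J_2)$, $D^2=2n(I_X\otimes I_2\otimes E)$ doing the work of distance-regularity; your type-$0$ computations (e.g.\ $A_{\widetilde{s}}A_{\widetilde{s'}}=\sum_{s''\neq 1_X}2c_{ss'}^{s''}A_{\widetilde{s''}}+2c_{ss'}^{1_X}(A_{1_{\widetilde{X}}}+A_{\widetilde{t}})$ and $A_{\widetilde{s}}A_{r^{\epsilon}_H}=n_sG$) reproduce exactly the paper's counting results in matrix form. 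What your approach buys is self-containedness — it re-proves the distance-regularity of $\Gamma(H)$ as a byproduct and makes visible exactly where $HH^T=nI_X$ is used — at the cost of a longer computation; the paper's approach is shorter because it outsources the hardest products to a cited result. All of your individual identities check out (including $A_{\widetilde{t}}A_{r^{1}_H}=A_{r^{-1}_H}$, the symmetry of $r^{\pm1}_H$ via $(H^{T(c)})_{yx}=(H^{T(a)})_{xy}$ for $a\neq c$, and the valency sanity checks $n^2 = n+\tfrac{n}{2}\cdot 2(n-1)$ for the row sums of $A_{r^{1}_H}^2$).
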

Note that $S(H)$ is a refinement of $C_2 \wr (S \wr C_2)$, where $C_2$ is the relation set of an association scheme of order $2$
(see Section \ref{sec:pre} for the definition of wreath product).

We prepare some notations.
\begin{itemize}
\item For a finite set $X$, $Sym(X)$ is the symmetric group on $X$.
\item For a permutation $\sigma \in Sym(X)$, $P_\sigma$ is the permutation matrix with respect to $\sigma$.
\item For a permutation group $G \leq Sym(X)$, $\mathcal{P}(G) := \{ P_\sigma \mid \sigma \in G \}$.
\item $diag(\varepsilon_x \mid x \in X)$ is a diagonal matrix whose the $(x, x)$-entry is $\varepsilon_x$,
where $\varepsilon: X \rightarrow \mathbb{C}$ is a function defined by $x \mapsto \varepsilon_x$.
\item $D_x$ is a diagonal matrix such that the $(x, x)$-entry is $-1$ and the other diagonal entries are $1$.
\end{itemize}

Let $\mathrm{I} := \mathcal{P}(\mathrm{Iso}(X,S))$ and
$\mathrm{D} := \langle \{ D_x \mid x \in X \} \rangle$ (see Section \ref{sec:pre} for the definition of $\mathrm{Iso}(X,S)$).
Also let $H_1$ and $H_2$ be $n \times n$ Hadamard matrices whose rows and columns are indexed by the elements of $X$.
We say that $H_1$ is \textit{similar} to $H_2$ with respect to $(X,S)$ if there exist $(P',P), (Q',Q) \in \mathrm{D} \times \mathrm{I}$ such that $H_2 = (P'P)^{-1} H_1 Q'Q$ or $H_2^T = (P'P)^{-1} H_1 Q'Q$,
and $(P'P)^{-1} Q'Q \in \mathrm{D}\rtimes \mathcal{P}(\mathrm{Aut}(X, S))$
(see Section \ref{sec:pre} for the definition of $\mathrm{Aut}(X,S)$).
Note that the similarity is an equivalence relation.

\begin{thm}\label{thm:main2}
$(\widetilde{X}, S(H_1))$ is isomorphic to $(\widetilde{X}, S(H_2))$ if and only if
$H_1$ is similar to $H_2$ with respect to $(X,S)$.
\end{thm}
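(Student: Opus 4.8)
The plan is to prove both directions by tracking how an isomorphism of association schemes must act on the distinguished relations of $S(H_i)$, and then decoding the Hadamard-matrix data from that action. First I would observe that $S(H_i)$ is a refinement of $C_2\wr(S\wr C_2)$, so its structure records two "layers" of the $\mathbb{F}_2$-coordinates: the equivalence relation $1_{\widetilde X}\cup\widetilde t\cup\bigcup_{\epsilon}r^\epsilon_{H_i}$ — call it $E$ — is common to both schemes and has classes the fibers $\{x_{ab}\mid b\in\mathbb{F}_2\}$ over $X\times\mathbb{F}_2$, while $1_{\widetilde X}\cup\widetilde t$ refines $E$ into the singletons' "edges" indexed by the first $\mathbb{F}_2$-coordinate. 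An isomorphism $\varphi:(\widetilde X,S(H_1))\to(\widetilde X,S(H_2))$ permutes relations preserving valencies and, crucially, must send $\widetilde t$ to $\widetilde t$ (it is the unique non-reflexive relation of valency $1$), hence preserves $E$ and induces a permutation on the set of $E$-classes, i.e. a bijection $X\times\mathbb{F}_2\to X\times\mathbb{F}_2$. Because $\varphi$ also preserves the coarser partition $1_X\cup s_4':=1_{\widetilde X}\cup\widetilde t\cup r^1_{H_i}\cup r^{-1}_{H_i}$ restricted appropriately — more precisely it must carry the "$\widetilde s$-layer" to itself since the $\widetilde s$ are exactly the relations inside the other equivalence classes — this bijection has the form $x_a\mapsto \psi(x)_{\pi(x,a)}$ for some $\psi\in Sym(X)$ and a choice of signs/flips; requiring compatibility with every $\widetilde s$, $s\in S$, forces $\psi\in \mathrm{Iso}(X,S)$ (or, in the $H_2^T$ case, an anti-isomorphism swapping the two $\mathbb{F}_2$-sheets).

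Next I would pin down the second $\mathbb{F}_2$-coordinate. On each $E$-class the map $\varphi$ acts as an affine map of $\mathbb{F}_2$ (since it must commute with $\widetilde t$, which is translation by $1$), i.e. $x_{ab}\mapsto \psi(x)_{\pi(x,a)\,(b+\beta_{x,a})}$ for signs $\beta_{x,a}\in\mathbb{F}_2$. Collecting the data $(\psi,\pi,\beta)$: the permutation of $X$-part of the first sheet gives an element $Q\in\mathrm{I}$, the permutation of the $X$-part of the second sheet gives $P\in\mathrm{I}$, and the sign vectors $\beta$ on the two sheets give diagonal matrices $P'\in\mathrm{D}$ (row flips) and $Q'\in\mathrm{D}$ (column flips). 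The heart of the argument is then to write out what "$\varphi$ maps $r^1_{H_1}$ to $r^1_{H_2}$" says in coordinates: the defining condition $(1-\delta_{ac})(H_1^{T(a)})_{xy}=(-1)^{b+d}$ is preserved under $\varphi$ iff the entries of $H_1$ and $H_2$ match up after the row permutation+flip $P'P$, the column permutation+flip $Q'Q$, and possibly the transpose (coming from whether $\varphi$ swaps the two $\mathbb{F}_2$-sheets, which is exactly the $H_2$ versus $H_2^T$ dichotomy). Unwinding this gives $H_2=(P'P)^{-1}H_1Q'Q$ or $H_2^T=(P'P)^{-1}H_1Q'Q$. The side condition $(P'P)^{-1}Q'Q\in \mathrm{D}\rtimes\mathcal P(\mathrm{Aut}(X,S))$ appears because $\varphi$ must respect the $\widetilde s$-relations \emph{as an automorphism of the underlying scheme on the fibers}: the "diagonal" part of $\varphi$ relating the two sheets over the same $X$-point must itself be an automorphism of $(X,S)$ (twisted by diagonal sign flips), which is precisely that membership statement.

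For the converse I would simply reverse this: given $(P',P),(Q',Q)\in\mathrm{D}\times\mathrm{I}$ with $H_2=(P'P)^{-1}H_1Q'Q$ (or the transposed version) and $(P'P)^{-1}Q'Q\in\mathrm{D}\rtimes\mathcal P(\mathrm{Aut}(X,S))$, I define $\varphi:\widetilde X\to\widetilde X$ by $x_{0b}\mapsto \pi_Q(x)_{0\,(b+\gamma_x)}$ and $x_{1b}\mapsto \pi_P(x)_{1\,(b+\delta_x)}$ where $\pi_Q,\pi_P$ and $\gamma,\delta$ are read off from $Q',Q$ and $P',P$, composed with the sheet-swap when the transpose occurs. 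Then one checks mechanically, using exactly the three defining matrix identities, that $\varphi$ sends $\widetilde t\to\widetilde t$, $\widetilde s\to\widetilde s$ for each $s\in S$ (here the $\mathrm{Aut}(X,S)$ condition is what makes the two sheets' $X$-permutations differ by an automorphism, so $\widetilde s$ is preserved on the cross-sheet part), and $r^\epsilon_{H_1}\to r^\epsilon_{H_2}$ (here the row/column flip-and-permute identity for $H$ is used). Hence $\varphi$ is an isomorphism $(\widetilde X,S(H_1))\to(\widetilde X,S(H_2))$.

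The main obstacle I anticipate is the forward direction's bookkeeping: showing that an abstract scheme isomorphism $\varphi$ \emph{must} decompose as the affine-on-fibers map above with the $X$-parts lying in $\mathrm{Iso}(X,S)$ and the cross-sheet discrepancy lying in $\mathrm{D}\rtimes\mathcal P(\mathrm{Aut}(X,S))$. This requires carefully identifying each distinguished relation of $S(H_i)$ by intrinsic combinatorial properties (valency, which equivalence relation it sits inside, how it composes with $\widetilde t$ and the $\widetilde s$) so that $\varphi$ has no freedom except the claimed parameters — in particular ruling out that $\varphi$ could mix $r^1_{H_i}$ with any $\widetilde s$ or with $r^{-1}_{H_i}$ in a way not accounted for, which uses that $r^{\pm1}_{H_i}$ are the only relations joining distinct $E$-classes over \emph{different} $X\times\mathbb{F}_2$-first-coordinates, together with the Hadamard condition~\eqref{A}-type count $|x_{ab}\,r^1_{H}\cap y_{cd}\,r^{-1}_H|$ that distinguishes $r^1$ from $r^{-1}$ internally. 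Once the relations are rigidly identified, extracting the matrix identities is a direct computation.
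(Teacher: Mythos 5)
Your overall strategy coincides with the paper's: show that any isomorphism $\phi$ must respect the fibers $\{x_{a0},x_{a1}\}$ and the sheets $X_0,X_1$, decompose $\phi$ on each sheet into a permutation of $X$ together with a sign function, translate $\phi(r^{1}_{H_1})\in\{r^{1}_{H_2},r^{-1}_{H_2}\}$ into the identity $H_2=(P'P)^{-1}H_1Q'Q$ (or its transpose), and reverse the construction for the converse. However, two specific claims you make are false and need repair. First, $E:=1_{\widetilde X}\cup\widetilde t\cup r^{1}_{H}\cup r^{-1}_{H}$ is not an equivalence relation (e.g.\ $x_{00}\sim y_{10}\sim z_{00}$ while $x_{00}\not\sim z_{00}$ for $x\neq z$), and its classes are not the fibers over $X\times\mathbb{F}_2$; the fiber partition is already $1_{\widetilde X}\cup\widetilde t$, and the sheet partition $\{X_0,X_1\}$ is the set of classes of $1_{\widetilde X}\cup\widetilde t\cup S^\sqcup$, which the paper identifies intrinsically as the thin residue $\Or(S(H))$. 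Your valency argument does pin down $\widetilde t$, but you still owe an argument that $\phi$ cannot send $r^{\pm1}_{H_1}$ to some $\widetilde s$ with $2n_s=n$; the remark that the $\widetilde s$ ``sit inside the equivalence classes'' is circular until the sheet partition is known to be preserved. (A valid substitute for the thin residue here is the intersection number $c^{\widetilde s}_{\widetilde t\,\widetilde s}=1$ versus $c^{r^{\epsilon}}_{\widetilde t\,r^{\epsilon}}=0$, since combinatorial isomorphisms preserve intersection numbers.)

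Second, and more importantly, the ``internal count'' you invoke to distinguish $r^{1}_{H}$ from $r^{-1}_{H}$ does not exist: $S(-H)=S(H)$ as a partition of $\widetilde X\times\widetilde X$ with the labels of $r^{\pm1}_{H}$ exchanged, so Lemma~\ref{lem:auto}(iv) gives a combinatorial isomorphism of $(\widetilde X,S(H))$ onto itself swapping $r^{1}_{H}$ and $r^{-1}_{H}$ and fixing every other relation; hence no combinatorial invariant separates them. This does not sink the theorem, because the two ambiguities you would otherwise need rigidity for are exactly the ones absorbed by the definition of similarity: the paper composes $\phi$ with the isomorphisms of Lemma~\ref{lem:auto}(i) and (iv) to normalize $\phi(X_0)=X_0$ and $\phi(r^{1}_{H_1})=r^{1}_{H_2}$, which is harmless since passing to $H_2^T$ is allowed explicitly and passing to $-H_2$ stays inside the $\mathrm{D}$-orbit. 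But your proof as written leans on a rigidity statement that fails, so you must insert this normalization (or carry the sign ambiguity through the computation) before extracting the matrix identity. With these two repairs, the remainder of your outline, including the treatment of the side condition $(P'P)^{-1}Q'Q\in\mathrm{D}\rtimes\mathcal{P}(\mathrm{Aut}(X,S))$ and the converse, matches the paper's proof.
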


Recall that two Hadamard matrices are \textit{equivalent} if one can be transformed into the other by a series of row or column permutations
or negations.
We define a group
\[\mathrm{Aut}_{x_0}(H) = \{ (\sigma, \tau) \in Sym(X) \times Sym(X) \mid P_\sigma^{-1} H P_\tau = H, \sigma(x_0)=x_0, \tau(x_0)=x_0 \}. \]

\begin{thm}\label{thm:main3}
Let $(X, S)$ be an association scheme of order $n$, $H_0$ a Hadamard matrix whose rows and columns are indexed by the elements of $X$, and $x_0 \in X$.
Then there are at least \[\frac{(n-1)! (n-1)!}{ 2 |\mathrm{Aut}_{x_0}(H_0)| |\mathrm{Aut}(X,S)| |\mathrm{Iso}(X,S)|}\]
isomorphism classes
of association schemes in $\{ (\widetilde{X}, S(H)) \mid H ~\text{is equivalent to}~ H_0 \}$.
\end{thm}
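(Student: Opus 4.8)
The plan is to reduce, via Theorem~\ref{thm:main2}, to a counting problem about similarity classes, and then to estimate a single orbit of permuted copies of $H_0$.

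Since the number of isomorphism classes occurring in $\{(\widetilde X, S(H)) \mid H \text{ equivalent to } H_0\}$ depends only on the equivalence class of $H_0$, I would first replace $H_0$ by an equivalent matrix \emph{normalized at $x_0$}: the row and the column indexed by $x_0$ consist of $1$'s only. By Theorem~\ref{thm:main2}, for $H, H'$ equivalent to $H_0$ one has $(\widetilde X, S(H)) \cong (\widetilde X, S(H'))$ if and only if $H$ is similar to $H'$ with respect to $(X,S)$; hence it suffices to bound below the number of similarity classes among the Hadamard matrices equivalent to $H_0$. Put $G := \{\sigma \in Sym(X) \mid \sigma(x_0) = x_0\}$, so $|G| = (n-1)!$, and
\[
\mathcal{F} := \{\, P_\sigma^{-1} H_0 P_\tau \mid \sigma, \tau \in G \,\}.
\]
Every member of $\mathcal{F}$ is equivalent to $H_0$ and is again normalized at $x_0$. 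The group $G \times G$ acts transitively on $\mathcal{F}$ by row and column permutations, and the stabilizer of $H_0$ is $\mathrm{Aut}_{x_0}(H_0)$, so by orbit--stabilizer $|\mathcal{F}| = (n-1)!\,(n-1)!/|\mathrm{Aut}_{x_0}(H_0)|$. Thus, by the pigeonhole principle, the theorem follows once one proves:

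\emph{Claim.} Every similarity class contains at most $2\,|\mathrm{Aut}(X,S)|\,|\mathrm{Iso}(X,S)|$ members of $\mathcal{F}$.

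To prove the Claim, fix $H^{\ast} \in \mathcal{F}$ and let $E$ be the group of pairs of permutations that realize an equivalence of $H^{\ast}$ with itself up to row and column negations. Unwinding the definition of similarity, $H' \in \mathcal{F}$ is similar to $H^{\ast}$ exactly when $H'$ or $(H')^{T}$ is obtained from $H^{\ast}$ by left and right multiplication by monomial matrices whose permutation parts are scheme-isomorphisms $\alpha, \beta$ satisfying $\alpha^{-1}\beta \in \mathrm{Aut}(X,S)$, the diagonal parts being then forced. Transporting this through the transitive $G \times G$-action, the no-transpose part of $\{H' \in \mathcal{F} : H' \sim H^{\ast}\}$ corresponds --- modulo the point stabilizer, of order $|\mathrm{Aut}_{x_0}(H_0)|$ --- to the intersection of $G \times G$ with a union of $|\mathrm{Iso}(X,S)|\,|\mathrm{Aut}(X,S)|$ translates of $E$, one for each admissible pair $(\alpha,\beta)$. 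Each such intersection has at most $|\{(\gamma_1,\gamma_2) \in E : \gamma_1(x_0) = \gamma_2(x_0) = x_0\}|$ elements, and here normalization is decisive: if $(\gamma_1,\gamma_2) \in E$ fixes $x_0$ on both sides, then the two negation matrices it involves scale the all-$1$ row and the all-$1$ column indexed by $x_0$, which forces them to be $\pm I$; hence $(\gamma_1,\gamma_2)$ is an \emph{exact} automorphism of $H^{\ast}$, and the count above equals $|\mathrm{Aut}_{x_0}(H^{\ast})| = |\mathrm{Aut}_{x_0}(H_0)|$. Dividing by the stabilizer size leaves at most $|\mathrm{Iso}(X,S)|\,|\mathrm{Aut}(X,S)|$ matrices in the no-transpose part, and the transpose alternative at most that many again, which proves the Claim.

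The main obstacle is exactly the bookkeeping inside the Claim: one must factor every monomial matrix as a diagonal times a permutation, track how the diagonal factors are determined and cancel through the rearrangement, run the transpose case in parallel, and check that the permutation parts that remain are precisely the pairs $(\alpha,\beta)$ of scheme-isomorphisms with $\alpha^{-1}\beta \in \mathrm{Aut}(X,S)$ --- all while keeping the conventions for $P_\sigma$ and for left versus right multiplication straight. The rigidity lemma (an equivalence-automorphism of a matrix normalized at $x_0$ that fixes its $x_0$-row and $x_0$-column is exact) is what keeps the translate-count from acquiring a spurious factor $|E|/|\mathrm{Aut}_{x_0}(H^{\ast})|$, so isolating and correctly using this lemma is the heart of the matter.
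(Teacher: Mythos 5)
Your proposal is correct and takes essentially the same route as the paper: Theorem~\ref{thm:main2} reduces the problem to counting similarity classes, the orbit--stabilizer count on the $(n-1)!\,(n-1)!$ permuted copies of a normalized representative supplies the numerator, and your ``rigidity lemma'' (a monomial self-equivalence of a matrix normalized at $x_0$ whose permutation parts fix $x_0$ has diagonal parts $\pm I_n$) is exactly the paper's computation $(G_{x_0})_{H_0}=\{\pm(I_n,I_n)\}\mathcal{P}(\mathrm{Aut}_{x_0}(H_0))$ in its Claim~1, while the factor $2$ and the bound $|\mathrm{Iso}(X,S)|\,|\mathrm{Aut}(X,S)|$ per class play the roles of the transpose alternative and of the paper's Claim~2. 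The only cosmetic difference is that you work inside the plain permutation orbit $\mathcal{F}$, so the factor $|\mathrm{D}|^2$ that the paper carries through both of its Claims and then cancels never appears.
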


This article is organized as follows.
In Section \ref{sec:pre}, we prepare some terminology and notations.
In Section \ref{sec:main}, we give proofs of the main theorems.
In Section \ref{sec:table}, we list tables related to the main results when $n = 4, 8$.

\section{Preliminaries}\label{sec:pre}

Based on \cite{zies,zies2}, we use the notation on association schemes.
Let $X$ be a non-empty finite set.
Let $S$ denote a partition of $X \times X$. Then the pair $(X, S)$ is
an \textit{association scheme} (or shortly  \textit{scheme}) if it satisfies the following conditions:

\begin{enumerate}
\item $1_{X} := \{ (\alpha, \alpha) \mid  \alpha \in X \} \in S$;
\item For each $s \in S$, $s^* := \{(\alpha, \beta) \mid (\beta, \alpha) \in s \} \in S$;
\item For all $ s, t, u \in S$, $c_{st}^u := | \alpha s \cap \beta t^* |$ is constant whenever $(\alpha, \beta) \in u$,
\end{enumerate}
where $\alpha s:=\{ \beta \in X \mid (\alpha, \beta) \in s \}$.

The numbers $\{ c_{st}^u \mid s, t, u \in S \}$ are called the \textit{intersection numbers} of $S$.
For each $ s \in S$, we abbreviate $c_{ss^*}^{1_X} $ as $n_s$, which is called the \textit{valency} of $s$.
We call $\sum_{s \in S} n_s$ the \textit{order} of $(X, S)$ which is equal to $|X|$.

The \textit{thin residue} of $S$ is the smallest subset $\Or(S)$ of $S$
such that $\bigcup_{t\in \Or(S)}t$ is an equivalence on $X$ and the factor scheme of $(X,S)$ over $\Or(S)$
is induced by a regular permutation group (see \cite[page 45]{zies2} for the definitions).

For each $s \in S$, we denote by $A_s$ the \textit{adjacency matrix} of $s$.
Namely $A_s$ is a matrix whose rows and columns are indexed by the elements of $X$ and $(A_s)_{xy} = 1$ if $(x,y) \in s$ and $(A_s)_{xy} = 0$ otherwise.

\vskip5pt
Let $(X,S)$ and  $(X_1, S_1)$ be association schemes. A bijective mapping $\phi : X \cup S \rightarrow X_1 \cup S_1$ is called an
\textit{isomorphism} from $(X,S)$ to $(X_1, S_1)$ if it satisfies the following conditions:
\begin{enumerate}
\item $\phi(X) \subseteq X_1$ and $\phi(S) \subseteq S_1$;
\item For all $x, y \in X$ and $s \in S$ with $(x,y) \in s$, $(\phi(x), \phi(y)) \in \phi(s)$.
\end{enumerate}
We denote by $\mathrm{Iso}(X,S)$ the set of isomorphisms from $(X,S)$ to itself.
An isomorphism $\phi : X \cup S \rightarrow X \cup S$ is called an \textit{automorphism} of $(X, S)$ if $\phi(s) = s$ for all $s \in S$.
We denote by $\mathrm{Aut}(X,S)$ the automorphism group of $(X, S)$.

We say that two association schemes $(X,S)$ and  $(X_1, S_1)$ are \textit{algebraically isomorphic} or have the \textit{same intersection numbers} if there exists a bijection $\iota : S \rightarrow S_1$ such that $c_{rs}^{t} = c_{\iota(r) \iota(s)} ^{\iota(t)}$ for all $r, s, t \in S$.

\vskip5pt
Let $(W,F)$ and $(Y,H)$ be association schemes.
For each $f \in F$ we define
\[ \overline{f}:= \{ ((w_1,y),(w_2,y)) \mid y \in Y, (w_1,w_2) \in f  \} .\]
For each $h \in H \setminus \{1_Y\}$ we define
\[ \overline{h}:=\{ ((w_1,y_1),(w_2,y_2)) \mid w_1, w_2 \in W, (y_1,y_2) \in h \}. \]
Denote $F \wr H := \{ \overline{f} \mid f \in F \} \cup \{\overline{h} \mid h \in H \setminus \{1_Y\} \}$.
Then $(W \times Y, F \wr H)$ is an association scheme called the \textit{wreath product} of $(W,F)$ and $(Y,H)$.

\section{Proofs of the main theorems}\label{sec:main}

\begin{flushleft}
\textbf{Proof of Theorem~\ref{thm:main1}}
\end{flushleft}
Remark that  $A_{r_H^1}$ forms the adjacency matrix of the Hadamard graph $\Gamma(H)$.
Then $A_{1_{\widetilde{X}}}$, $A_{r_H^1}$, $A_{S^\sqcup}$, $A_{r_{H}^{-1}}$, and $A_{\tilde{t}}$ are the adjacency matrices of distance $i$ graphs $\Gamma(H)_i \ (0 \leq i \leq 4)$, where $S^\sqcup=\bigcup_{s \in S \setminus \{1_X\}}\widetilde{s}$.
Since $\Gamma(H)$ is distance-regular, $(\widetilde{X}, \{1_{\widetilde{X}}, r_H^1, S^\sqcup, r_{H}^{-1}, \tilde{t}\})$ forms an association scheme.
So, it suffices to show that $A_{r_1} A_{r_2}$ is a linear combination of $\{A_{1_{\widetilde{X}}}, A_{r_H^1},A_{r_{H}^{-1}}, A_{\tilde{t}} \}\cup \{A_{\widetilde{s}}\mid s\in S\setminus \{1_X\}\}$, where at least one of $r_1$ and $r_2$ is in $\{\widetilde{s}\mid s\in S\setminus \{1_X\}\}$. From now on, let $s_0 \in S\setminus \{1_X\}\}$.

By the definition of $S(H)$, $$A_{\widetilde{s_0}}A_{1_{\widetilde{X}}}=A_{1_{\widetilde{X}}}A_{\widetilde{s_0}}=A_{\widetilde{s_0}}$$ and  $$A_{\widetilde{s_0}}A_{\widetilde{t}}=A_{\widetilde{t}}A_{\widetilde{s_0}}=A_{\widetilde{s_0}}.$$

Since $(X,S)$ is an association scheme, for $s_1 \in S\setminus\{1_X\}$, $A_{s_0}A_{s_1}=\sum_{s\in S}c^s_{s_0 s_1}A_s$ and $A_{s_1}A_{s_0}=\sum_{s\in S}c^s_{s_1 s_0}A_s$, where $c_{s_0 s_1}^s$'s and $c_{s_1 s_0}^s$'s are intersection numbers of $(X,S)$.
For $x_{ab}, y_{cd}\in \widetilde{X}$, let us consider the set $$Z:=\{z_{ef}\in \widetilde{X}\mid  (x_{ab}, z_{ef}) \in \widetilde{s_0} \ \mbox{and} \  (z_{ef}, y_{cd}) \in \widetilde{s_1}\}.$$
Then for $z_{ef}\in Z$, $a=e$ and $(x,z)\in s_0$ as $(x_{ab}, z_{ef}) \in \widetilde{s_0}$, and $e=c$ and $(z,y)\in s_1$ as $(z_{ef}, y_{cd}) \in \widetilde{s_1}$. And we can check the following facts:
\begin{enumerate}
\item If $(x_{ab}, y_{cd})\in 1_{\widetilde{X}}\cup \widetilde{t}$ then $x=y$ and $a=c$. So, there are $c_{s_0 s_1}^{1_X}$ choices of $z$, one choice of $e$ and two choices of $f$. Thus, $| Z | =2\cdot c_{s_0 s_1}^{1_X}$;
\item If $(x_{ab},y_{cd})\in \widetilde{s}$ for some $s\in S\setminus\{1_X\}$, then $(x,y)\in s$ and $a=c$. So, there are $c_{s_0 s_1}^{s}$ choices of $z$, one choice of $e$ and two choices of $f$. Thus, $| Z | =2\cdot c_{s_0 s_1}^{s}$;
\item If $(x_{ab}, y_{cd})\in r_H^1\cup r_H^{-1}$, then $a\neq c$. Since $a=e=c$ for any $z_{ef}\in Z$, $| Z |=0$.
\end{enumerate}
Combining (i), (ii) and (iii), we obtain $$A_{\widetilde{s_0}}A_{\widetilde{s_1}}=\sum_{s\in S\setminus \{1_X\}}2 \cdot c^{s}_{s_0 s_1}A_{\widetilde{s}}+2\cdot c_{s_0 s_1}^{1_X}(A_{1_{\widetilde{X}}}+A_{\widetilde{t}}).$$
And by symmetric argument, we can show that $$A_{\widetilde{s_1}}A_{\widetilde{s_0}}=\sum_{s\in S\setminus\{1_X\}}2 \cdot c^{s}_{s_1 s_0}A_{\widetilde{s}}+2\cdot c_{s_1 s_0}^{1_X}(A_{1_{\widetilde{X}}}+A_{\widetilde{t}}).$$

Now again, for $x_{ab}, y_{cd}\in \widetilde{X}$, let us consider the set $$Z':=\{ z_{ef}\in \widetilde{X}\mid (x_{ab}, z_{ef}) \in \widetilde{s_0}  \ \mbox{and} \  (z_{ef}, y_{cd}) \in r_{H}^1\}.$$
Then $a=e$ and $(x,z)\in s_0$ as $(x_{ab}, z_{ef}) \in \widetilde{s_0}$, and $e\neq c$ and $(H^{T(e)})_{zy}=(-1)^{f+d}$ as $(z_{ef}, y_{cd}) \in r_{H}^1$. And we can check the following facts:
\begin{enumerate}
\item If $(x_{ab}, y_{cd})\in 1_{\widetilde{X}}\cup \widetilde{t}\cup S^\sqcup$, then $a=c$. Since $a=e\neq c$ for any $z_{ef}\in Z'$, $|Z'| = 0$;
\item If $(x_{ab}, y_{cd})\in r_H^1\cup r_H^{-1}$, then $a\neq c$ and so, there is one choice of $e$. Since $(x,z)\in s_0$, there are $n_{s_0}$ choices of $z$. And for fixed $d, e, y$ and $z$, there is one choice of $f$ as $(H^{T(e)})_{zy}=(-1)^{f+d}$. Thus, $|Z'| = n_{s_0}$.
\end{enumerate}
Combining (i) and (ii), we obtain $$A_{\widetilde{s_0}}A_{r_H^{1}}=n_{s_0}(A_{r_H^1} +A_{r_H^{-1}}).$$
By symmetric argument, we can show that $$A_{r_H^1}A_{\widetilde{s_0}}=n_{s_0}(A_{r_H^1} +A_{r_H^{-1}}).$$

Since $ r^{-1}_H = \{(x_{ab}, y_{cd}) \mid (1- \delta_{ac})(H^{T(a)})_{xy} = (-1)^{b+d+1}, x, y \in X, a, b, c, d \in \mathbb{F}_2 \}$,
a similar argument yields $$A_{\widetilde{s_0}}A_{r_H^{-1}}=A_{r_H^{-1}}A_{\widetilde{s_0}}=n_{s_0}(A_{r_H^1} +A_{r_H^{-1}}).$$
This completes the  proof of Theorem \ref{thm:main1}.
\qed

\begin{lem}\label{lem:auto}
For a Hadamard matrix $H$ and an association scheme $(X,S)$,
let $(\widetilde{X}, S(H))$ be the association scheme constructed by $(\ref{M})$.
Then we have the following:
\begin{enumerate}
\item $\phi : x_{ab} \mapsto x_{(a+1)b}, h \mapsto \phi(h)$ is an isomorphism from $\widetilde{X} \cup S(H)$ to $\widetilde{X} \cup S(H^T)$;
\item For $y \in X$, the transposition $\alpha_y = (y_{ab}~ y_{a(b+a+1)}) \in Sym(\widetilde{X})$ induces an isomorphism $\phi$
from $\widetilde{X} \cup S(H)$ to $\widetilde{X} \cup S(H_1)$ defined by $\phi|_{\widetilde{X}}=\alpha_y , \phi|_{S(H)} : h \mapsto \phi(h)$;
\item For $y \in X$, the transposition $\beta_y = (y_{ab}~ y_{a(b+a)}) \in Sym(\widetilde{X})$ induces an isomorphism
from $\widetilde{X} \cup S(H)$ to $\widetilde{X} \cup S(H_2)$ defined by $\phi|_{\widetilde{X}}=\beta_y , \phi|_{S(H)} : h \mapsto \phi(h)$;
\item $\phi : x_{ab} \mapsto x_{a(b+a+1)}, h \mapsto \phi(h)$ is an isomorphism from $\widetilde{X} \cup S(H)$ to $\widetilde{X} \cup S(-H)$;
\item $\phi : x_{ab} \mapsto x_{a(b+a)}, h \mapsto \phi(h)$ is an isomorphism from $\widetilde{X} \cup S(H)$ to $\widetilde{X} \cup S(-H)$,
\end{enumerate}
where $\phi(h):= \{ (\phi(x), \phi(y)) \mid (x, y) \in h \}$ for $h \in S(H)$, $H_1 = D_y H$ and $H_2 = H D_y$.
\end{lem}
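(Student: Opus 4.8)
The plan is to verify each of the five assertions by checking directly that the stated vertex map, which is either a coordinate shift in the first $\mathbb{F}_2$-slot (part (i)), a coordinate shift in the second slot depending on the first slot (parts (iv), (v)), or a single transposition in the fibre over one point $y$ (parts (ii), (iii)), sends each relation of $S(H)$ bijectively onto the correspondingly-named relation of the target scheme $S(H^T)$, $S(-H)$, $S(D_yH)$, or $S(HD_y)$. First I would record the trivial cases: every such map fixes $1_{\widetilde X}$ setwise, and for $\widetilde t$ and for each $\widetilde s$ with $s\in S\setminus\{1_X\}$ one only needs that the map preserves the first slot $a$ (for $\widetilde s$ also the underlying pair $(x,y)\in s$) and permutes the second slot $b$ in a way compatible with $b\mapsto b+1$, which is immediate from the explicit formulas; none of $\widetilde t$, $\widetilde s$ involve $H$ at all, so they are literally fixed. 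Hence the entire content is the behaviour of $r^{1}_H$ and $r^{-1}_H$, and since $r^{-1}_H$ is the complement of $r^{1}_H$ inside $\{a\neq c\}$ (as already noted in the proof of Theorem~\ref{thm:main1}), it suffices to track $r^{1}_H$.

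For part (i): under $\phi:x_{ab}\mapsto x_{(a+1)b}$, a pair $(x_{ab},y_{cd})$ with $a\neq c$ maps to $(x_{(a+1)b},y_{(c+1)d})$, still with $a+1\neq c+1$, and the defining condition $(H^{T(a)})_{xy}=(-1)^{b+d}$ must be matched against $(G^{T(a+1)})_{xy}=(-1)^{b+d}$ where $G$ is the target matrix; since $T(a+1)$ swaps the roles of "transpose" and "not transpose", choosing $G=H^T$ makes $G^{T(a+1)}=H^{T(a)}$, so the condition is preserved and $\phi(r^1_H)=r^1_{H^T}$. For parts (iv) and (v): under $\phi:x_{ab}\mapsto x_{a(b+\epsilon a)}$ with $\epsilon\in\{0,1\}$ (the additive shift $b\mapsto b+a$ or $b\mapsto b+a+1$ is this composed with a global $b\mapsto b+1$, which I will treat uniformly), the exponent $(-1)^{b+d}$ becomes $(-1)^{(b+\epsilon a)+(d+\epsilon c)}=(-1)^{b+d}(-1)^{\epsilon(a+c)}=(-1)^{b+d}\cdot(-1)^{\epsilon}$ since $a+c=1$ on $r^1_H$; so negating the entire Hadamard matrix exactly compensates, giving $\phi(r^1_H)=r^1_{-H}$ (and the extra global $b\mapsto b+1$ shift changes nothing because it adds $1+1=0$ to the exponent). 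For parts (ii) and (iii): the transposition $\alpha_y$ (resp.\ $\beta_y$) alters the second slot only on the fibre over the single point $y$, so $r^1_H$ and its image differ only in pairs $(x_{ab},y_{cd})$ or $(y_{ab},z_{cd})$ involving $y$; on such pairs the exponent picks up a sign exactly in the column (resp.\ row) indexed by $y$, which is precisely the effect of multiplying $H$ on the appropriate side by $D_y$, and one checks the $H$-versus-$H^T$ bookkeeping works out because $(HD_y)^T=D_yH^T$ and $(D_yH)^T=H^TD_y$.

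The main obstacle I expect is purely bookkeeping: in parts (ii) and (iii) one must be careful that the single transposition $\alpha_y=(y_{ab}\ y_{a(b+a+1)})$ really is a well-defined involution on $\widetilde X$ for both values of $a$ simultaneously (i.e.\ that it is the product of the two transpositions obtained by fixing $a=0$ and $a=1$, and that these are disjoint), and that the sign change it induces on $r^1_H$ is localized to exactly the $y$-column of $H$ when $a=0$ and the $y$-column of $H^T$ — i.e.\ the $y$-row of $H$ — when $a=1$; reconciling this with the single matrix $D_y$ acting on the right (for $\alpha_y$) requires using that $H^{T(1)}=H^T$ and $H D_y$ has transpose $D_y H^T$, so the "$a=1$" half of $r^1_{HD_y}$ is governed by $(HD_y)^T=D_yH^T$, matching the $y$-row alteration. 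Once this sign-localization is stated cleanly the verification of conditions (i)--(ii) in the definition of isomorphism is routine, and defining $\phi$ on relations by $\phi(h):=\{(\phi(x),\phi(y))\mid(x,y)\in h\}$ makes the map a bijection $\widetilde X\cup S(H)\to\widetilde X\cup S(\cdot)$ automatically once we know it sends named relations to named relations.
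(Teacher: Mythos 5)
Your overall strategy is the same as the paper's: check directly that each vertex map fixes $1_{\widetilde X}$, $\widetilde t$ and every $\widetilde s$, and then track only $r^{1}_H$ (with $r^{-1}_H$ following because it is the complement of $r^{1}_H$ inside $\{a\neq c\}$ and each map preserves that set). Your treatments of (i) and of (iv)--(v) are correct; for (iv)--(v) you compute the exponent shift $(-1)^{\epsilon(a+c)}=(-1)^{\epsilon}$ directly, whereas the paper instead deduces (iv) and (v) by composing all the transpositions $\alpha_x$ (resp.\ $\beta_x$) from (ii) and (iii) and using $\prod_x D_x=-I$. Either route is fine.

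There is, however, a concrete error in your handling of (ii) and (iii): the row/column and left/right assignments are backwards. The transposition $\alpha_y$ moves only the two vertices $y_{00},y_{01}$, which lie in the fibre with first slot $a=0$. Such a vertex enters the defining condition of $r^{1}_H$ either as the first coordinate of a pair $(y_{0b},z_{1d})$, where the relevant entry is $(H^{T(0)})_{yz}=H_{yz}$, or as the second coordinate of a pair $(x_{1b},y_{0d})$, where the relevant entry is $(H^{T(1)})_{xy}=H_{yx}$; in both cases the entry lies in \emph{row} $y$ of $H$. So $\alpha_y$ negates row $y$, i.e.\ it corresponds to left multiplication, $H\mapsto D_yH=H_1$, and symmetrically $\beta_y$ (which moves $y_{10},y_{11}$) negates \emph{column} $y$ and corresponds to $H\mapsto HD_y=H_2$. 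Your plan states the opposite --- ``the exponent picks up a sign exactly in the column (resp.\ row) indexed by $y$'' and ``the single matrix $D_y$ acting on the right (for $\alpha_y$)'' --- and if carried out literally it would pair $\alpha_y$ with $HD_y$ and $\beta_y$ with $D_yH$, contradicting the statement being proved. The identities $(HD_y)^T=D_yH^T$ and $(D_yH)^T=H^TD_y$ that you invoke are correct, but they are being used to reconcile the wrong pairing. This is a bookkeeping slip rather than a missing idea, but since the entire content of (ii) and (iii) is exactly this bookkeeping, it must be fixed: redo the case analysis as above, splitting according to whether the moved vertex $y_{0b}$ occurs as the first or the second coordinate of a pair in $r^{1}_H$, and observe that both cases point to row $y$ of $H$.
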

\begin{proof}
In the cases (i), (ii) and (iii), it is easy to see that $\phi$ is bijective on $\widetilde{X}$, $\phi(1_{\widetilde{X}}) = 1_{\widetilde{X}}$, $\phi(\widetilde{t}) = \widetilde{t}$ and $\phi(\widetilde{s}) = \widetilde{s}$ for $s \in S \setminus \{ 1_X \}$.
\begin{enumerate}
\item
Let $(x_{ab}, y_{cd}) \in r^{1}_H$.
Then $(\phi(x_{ab}), \phi(y_{cd})) = (x_{(a+1)b}, y_{(c+1)d})$.
Since $H^{T(a)} = (H^T)^{T(a+1)}$, we have $(\phi(x_{ab}), \phi(y_{cd})) \in r^{1}_{H^T}$.
This means that $\phi(r^{1}_H) = r^{1}_{H^T}$.

\item
Let $(y_{ab}, z_{cd}) \in r^{1}_H$.
When $a=0$, we have $(\alpha_y(y_{ab}), \alpha_y(z_{cd})) = (y_{0(b+1)}, z_{cd})$ and
\[ (y_{0(b+1)}, z_{cd}) \in r^{1}_{H_1} \Leftrightarrow (1 - \delta_{0c})((H_1)^{T(0)})_{yz} = (-1)^{b+d+1}.\]

Since $(1 - \delta_{0c})((H_1)^{T(0)})_{yz} = (-1)(1 - \delta_{0c})(H^{T(0)})_{yz}  = (-1)(-1)^{b+d}$,
we have $(\alpha_y(y_{ab}), \alpha_y(z_{cd})) \in r^{1}_{H_1}$.
This implies that $\phi(r^{1}_H) = r^{1}_{H_1}$.

\item
Let $(y_{ab}, z_{cd}) \in r^{1}_H$.
When $a=1$, we have $(\beta_y(y_{ab}), \beta_y(z_{cd})) = (y_{1(b+1)}, z_{cd})$
Similarly to (ii), we can show $(\beta_y(y_{ab}), \beta_y(z_{cd})) \in r^{1}_{H_2}$.
This means that $\phi(r^{1}_H) = r^{1}_{H_2}$.

\item
Since $\phi$ represents $\Pi_{x \in X} \alpha_x$ as a permutation,
(ii) implies that $\phi : x_{ab} \mapsto x_{a(b+a+1)}$ is an isomorphism $\widetilde{X} \cup S(H) \rightarrow \widetilde{X} \cup S(-H)$.

\item
Since $\phi$ represents $\Pi_{x \in X} \beta_x$ as a permutation,
(iii) implies that $\phi : x_{ab} \mapsto x_{a(b+a)}$ is an isomorphism $\widetilde{X} \cup S(H) \rightarrow \widetilde{X} \cup S(-H)$.
\end{enumerate}
\end{proof}


\begin{flushleft}
\textbf{Proof of Theorem~\ref{thm:main2}}
\end{flushleft}
For convenience, we denote $\{ x_{0b} \mid b \in \mathbb{F}_2, x \in X \}$ by $X_0$
and $\{ x_{1b} \mid b \in \mathbb{F}_2, x \in X \}$ by $X_1$.
\vskip10pt

$(\Rightarrow):$
Assume $(\widetilde{X}, S(H_1)) \simeq (\widetilde{X}, S(H_2))$.
We denote by $\phi$ an isomorphism from $(\widetilde{X}, S(H_1))$ to $(\widetilde{X}, S(H_2))$.
Since $\Or(S(H_1)) = \{1_{\widetilde{X}}, \widetilde{t}\}\cup \{\widetilde{s}\mid s \in S\setminus\{1_{X}\}\}$
and $\phi(\Or(S(H_1))) = \Or(S(H_2))$, $\phi (X_0)$ is either $X_0$ or $X_1$.

By Lemma \ref{lem:auto}(i), there exists an isomorphism from $(\widetilde{X}, S(H_2))$ to $(\widetilde{X}, S(H_2^T))$.
So, we may assume $\phi (X_0) = X_0$.

Since $\phi(\Or(S(H_1))) = \Or(S(H_2))$ and $r_{H_1}^1 \in S(H_1) \setminus \Or(S(H_1))$, $\phi (r_{H_1}^1)$ is either $r_{H_2}^1$ or $r_{H_2}^{-1}$.
By Lemma \ref{lem:auto}(iv), there exists an isomorphism from $(\widetilde{X}, S(H_2))$ to $(\widetilde{X}, S(-H_2))$.
So, we also assume $\phi (r_{H_1}^1) = r_{H_2}^1 ~\text{and}~ \phi (r_{H_1}^{-1}) = r_{H_2}^{-1}$.

Using the fact that $\phi(x_{ab}) = y_{ad}$ for some $y_{ad} \in \widetilde{X}$,
we define $\sigma_a \in Sym(X)$ and $\tau_a : X \rightarrow \mathbb{F}_2$ such that
$\phi(x_{ab}) = \sigma_a(x)_{a (b + \tau_a(x))}$ for each $x_{ab} \in \widetilde{X}$.

Now we check that $\tau_a(x)$ is well defined.
It suffices to show that $\tau_a(x)$ does not depend on $b$ of $x_{ab}$.
If $\phi(x_{ab}) = y_{ad}$ and $\phi(x_{ab'}) = y_{ad'}$,
then $y_{ad} = \sigma_a(x)_{a (b + \tau_a(x))}$ and $y_{ad'} = \sigma_a(x)_{a (b' + \tau_a(x))}$.
Since $\phi$ is well defined, we have $d \neq d'$ for $b \neq b'$.
This means that $\tau_a(x) = b + d = b' + d'$.

We consider two matrices as follows.
\[ P_a := P_{\sigma_a}  ~\text{and}~  Q_a := diag((-1)^{\tau_a(x)} \mid x \in X). \]
\vskip10pt

First, we claim that $(Q_0 P_0)^{-1} H_1 Q_1 P_1$ is similar to $H_1$.
For each $a \in \mathbb{F}_2$, clearly $(Q_a, P_a) \in \mathrm{D} \times \mathrm{I}$.
In order to show $(Q_0 P_0)^{-1} Q_1 P_1 \in \mathrm{D}\rtimes \mathcal{P}(\mathrm{Aut}(X, S))$,
it suffices to check $P_0^{-1} P_1 \in \mathcal{P}(\mathrm{Aut}(X, S))$, since $\mathrm{D}$ is normal in $\mathrm{D}\rtimes \mathcal{P}(\mathrm{Aut}(X, S))$.
Let $(x_{0b}, y_{0c}), (x_{1b}, y_{1c}) \in \widetilde{s}$.
Then $(\sigma_0(x)_{0(b+\tau_0(x))}, \sigma_0(y)_{0(b+\tau_0(y))})$, $(\sigma_1(x)_{1(b+\tau_1(x))},
\sigma_1(y)_{1(b+\tau_1(y))})$ $\in \phi(\widetilde{s}) = \widetilde{s_1}$ for some $s_1 \in S$.
By the definition of $s_1$, we have $(\sigma_0(x), \sigma_0(y)), (\sigma_1(x), \sigma_1(y)) \in s_1$.
This implies $P_0^{-1} P_1 \in \mathcal{P}(\mathrm{Aut}(X, S))$.

\vskip10pt
Next, we claim that $(\widetilde{X}, \phi(S(H_1))) = (\widetilde{X}, S((Q_0 P_0)^{-1} H_1 Q_1 P_1))$, i.e., $H_2 = (Q_0 P_0)^{-1} H_1 Q_1 P_1$.
It suffices to verify that $(x', y')$-entry of $H_2$ is equal to that of $(Q_0 P_0)^{-1} H_1 Q_1 P_1$.

Let $(x_{ab}, y_{cd}) \in r_{H_1}^1$.
Then $(x'_{ab'}, y'_{cd'}) \in r_{H_2}^1$, where $\phi(x_{ab})= x'_{ab'}$ and $\phi(y_{cd})= y'_{cd'}$.

If $a=0$, then $(H_1)_{xy} = (-1)^{b + d}$ and $(H_2)_{x'y'} = (-1)^{b' + d'}$.
Since
\[(P_0^{-1} Q_0^{-1} H_1 Q_1 P_1)_{x'y'} = (Q_0^{-1} H_1 Q_1)_{xy} = (-1)^{\tau_0(x)} (H_1)_{xy} (-1)^{\tau_1(y)},\]
\[b + \tau_0(x) = b' ~~\text{and}~~ d + \tau_1(y) = d',\]

we have
\begin{eqnarray*}
  (P_0^{-1} Q_0^{-1} H_1 Q_1 P_1)_{x'y'} &=& (Q_0^{-1} H_1 Q_1)_{xy}  \\
                                         &=& (-1)^{\tau_0(x)} (H_1)_{xy} (-1)^{\tau_1(y)} \\
                                         &=& (-1)^{b' + d'} = (H_2)_{x'y'}.
\end{eqnarray*}

If $a=1$, then $(H_1^{T})_{xy} = (-1)^{b + d}$ and $(H_2^{T})_{x'y'} = (-1)^{b' + d'}$.
Since
\[(P_1^{T} Q_1^{T}  H_1^{T} (Q_0^{-1})^{T} (P_0^{-1})^{T} )_{x'y'} = (Q_1^{T} H_1^{T} (Q_0^{-1})^{T})_{xy} = (-1)^{\tau_0(y)} (H_1)_{yx} (-1)^{\tau_1(x)},\]
\[b + \tau_1(x) = b' ~~\text{and}~~ d + \tau_0(y) = d',\]

we have
\begin{eqnarray*}
  (P_1^{T} Q_1^{T}  H_1^{T} (Q_0^{-1})^{T} (P_0^{-1})^{T} )_{x'y'} &=& (P_1^{-1} Q_1^{T}  H_1^{T} (Q_0^{-1})^{T} P_0)_{x'y'} \\
                                           &=& (Q_1^{T} H_1^{T} (Q_0^{-1})^{T})_{xy}  \\
                                           &=& (-1)^{\tau_0(y)} (H_1)_{yx} (-1)^{\tau_1(x)} \\
                                           &=& (-1)^{b' + d'} = (H_2^{T})_{x'y'}.
\end{eqnarray*}

\vskip10pt
$(\Leftarrow):$
Assume that $H_1$ is similar to $H_2$ with respect to $(X,S)$.
Since $(\widetilde{X}, S(H_2))$ is isomorphic to $(\widetilde{X}, S(H_2^T))$,
it suffices to consider the case $H_2 = P'P H_1 QQ'$, where $P, Q \in \mathrm{I}$ and $P', Q' \in \mathrm{D}$.
Then $P'P$ can be decomposed into $D_{x_1} D_{x_2} \cdots D_{x_m} P_{\sigma_0}$, where $P = P_{\sigma_0}$ and $D_{x_i} \in \mathrm{D}$ $(i=1, \dotsc, m)$.

According to the fact that $D_{x_i}$ has $-1$ at the entry corresponding to $x_i \in X$,
we define a transposition $\phi_i := ((x_i)_{0b} (x_i)_{0(b+1)}) \in Sym(X_0)$ $(i=2, \dotsc, m)$.
We define a permutation $\phi_{\sigma_0}$ in $Sym(X_0)$ by $\phi_{\sigma_0} (x_{0b}) = (\sigma_0(x))_{0b}$.
Put $\phi := \phi_1 \cdots \phi_m \phi_{\sigma_0}$.

\vskip10pt
Also, $QQ'$ can be decomposed into $Q_{\sigma_1} D_{y_1} \cdots D_{y_l}$, where $Q = Q_{\sigma_1}$ and $D_{y_i} \in \mathrm{D}$ $(i=1, \dotsc, l)$.
Similarly, we define $\psi_i$ $(i=1, \dotsc, l)$ and $\psi_{\sigma_1}$ in $Sym(X_1)$ as the above $\phi_i$ and $\phi_{\sigma_0}$.
Put $\psi :=  \psi_1 \cdots \psi_l \psi_{\sigma_1}$.

\vskip10pt
We claim that $\phi \cup \psi$ is an isomorphism from $(\widetilde{X}, S(H_1))$ to $(\widetilde{X}, S(H_2))$.
It is easy to see that $\phi \cup \psi$ is bijective on $\widetilde{X}$, $(\phi \cup \psi)(1_{\widetilde{X}}) = 1_{\widetilde{X}}$ and
$(\phi \cup \psi)(\tilde{t}) = \tilde{t}$.
Since $\sigma_0 \sigma_1^{-1} \in \mathrm{Aut}(X, S)$,
for each $\tilde{s} \in S(H_1) \setminus \{ 1_{\widetilde{X}}, \tilde{t}, r_{H_1}^\epsilon \}$,
we have $(\phi \cup \psi)(\tilde{s}) = (\phi_1 \cdots \phi_m \cup \psi_1 \cdots \psi_l)(\tilde{s'})$ for some $\tilde{s'} \in S(H_1)$.
Since $\phi_i$ and $\psi_j$ $(1 \leq i \leq m, 1 \leq j \leq l)$ preserve $\tilde{s'}$, we have $(\phi \cup \psi)(\tilde{s}) \in S(H_2)$.

We can check the following facts:
\begin{enumerate}
\item $\phi_i$ $(i = 1, \dotsc, m)$ and $\phi_{\sigma_0}$ correspond to multiplying $-1$ to only one row of $H_1$ and permuting rows of $H_1$, respectively;
\item $\psi_i$ $(i = 1, \dotsc, l)$ and $\psi_{\sigma_1}$ correspond to multiplying $-1$ to only one column of $H_1$ and permuting columns of $H_1$,
respectively.
\end{enumerate}
This implies $\phi (r_{H_1}^\epsilon) = r_{H_2}^\epsilon$.
Therefore, $\phi \cup \psi$ is an isomorphism from $(\widetilde{X}, S(H_1))$ to $(\widetilde{X}, S(H_2))$.
\qed

\begin{flushleft}
\textbf{Proof of Theorem~\ref{thm:main3}}
\end{flushleft}

We define an action of $GL_n(\mathbb{C}) \times GL_n(\mathbb{C})$ on $Mat_X(\mathbb{C})$ by $(H, (P, Q)) \mapsto P^{-1}HQ$.
Restricting our attention to the following subgroups of $GL_n(\mathbb{C}) \times GL_n(\mathbb{C})$ and a subset of $Mat_X(\mathbb{C})$,
we observe their orbits.

Let
\begin{equation}\label{B}
G = \{ (P, Q) \mid P, Q \in \mathrm{D}\rtimes \mathcal{P}(Sym(X)) \},
\end{equation}
\begin{equation}\label{C}
K = \{ (P, Q) \mid P, Q \in \mathrm{D}\rtimes \mathrm{I}, PQ^{-1} \in \mathrm{D}\rtimes \mathcal{P}(\mathrm{Aut}(X,S)) \},
\end{equation}
\begin{equation}\label{D}
G_{x_0} = \{ (P, Q) \mid P, Q \in \mathrm{D}\rtimes \mathcal{P}(Sym(X)_{x_0}),  \}
\end{equation}
where $Sym(X)_{x_0}  = \{ \sigma \in Sym(X) \mid \sigma(x_0) = x_0  \}$.

Let $\mathcal{H}$ be the set of Hadamard matrices of order $n$.
We consider an orbit of $G$ acting on $\mathcal{H}$.
Then the orbit $H_0^G$ is the set of Hadamard matrices which are equivalent to $H_0$, and
decomposed into
\[ \bigcup_{i=1}^r H_i^K,\]
where $H_i \in H_0^G$.
In particular, each orbit of $K$-action on $H_0^G$ is a subset of a similarity class with respect to $(X,S)$,
since for each $1 \leq i \leq r$, the transpose of $H_i$ may not belong to $H_i^{K}$.

By Theorem \ref{thm:main2}, the number of isomorphism classes of association schemes in $\{ (\widetilde{X}, S(H)) \mid H ~\text{is equivalent to}~ H_0 \}$ is at least $\frac{r}{2}$.

Now we give a lower bound for $r$.
Since each orbit $H_i^K$ contains a normalized Hadamard matrix,
without loss of generality, we may assume that $H_0, H_1, \dotsc, H_r$ are normalized Hadamard matrices.
By the orbit-stabilizer property (see \cite[page 57]{rotman}), we have
\[\frac{|G_{x_0}|}{|(G_{x_0})_{H_0}|} = |H_0^{G_{x_0}}| \leq |H_0^G|,\]
where $(G_{x_0})_{H_0}$ is the stabilizer of $H_0$.

\vskip10pt
\textbf{Claim 1}: \[\frac{|G_{x_0}|}{|(G_{x_0})_{H_0}|} = \frac{(n-1)! (n-1)!}{|\mathrm{Aut}_{x_0}(H_0)|} \frac{|\mathrm{D}|^2}{2}.\]

First of all, we verify $(G_{x_0})_{H_0} = \{ \pm(I_n, I_n) \} \mathcal{P}(\mathrm{Aut}_{x_0}(H_0))$.
Every element of $G_{x_0}$ is decomposed into $(P_1P_2, Q_1Q_2)$, where $P_1, Q_1 \in \mathcal{P}(Sym(X)_{x_0})$ and $P_2, Q_2 \in \mathrm{D}$.
We consider all $(P_1P_2, Q_1Q_2)$ such that
\begin{equation}\label{E}
P_2^{-1}P_1^{-1}H_0Q_1Q_2 = H_0.
\end{equation}

Since $H_0$ is normalized, $P_1^{-1}H_0Q_1$ is also normalized.
So, (\ref{E}) implies that $(P_2, Q_2)$ is either $(I_n, I_n)$ or $(-I_n, -I_n)$.
Whichever the case may be, $(P_1,Q_1)$ must satisfy $P_1^{-1}H_0Q_1 = H_0$.
This implies $(P_1, Q_1) \in \mathcal{P}(\mathrm{Aut}_{x_0}(H_0))$.
Thus, $|(G_{x_0})_{H_0}| = 2 \cdot |\mathrm{Aut}_{x_0}(H_0)|$. This completes the proof of Claim 1.
\vskip15pt

\textbf{Claim 2}: \[|K| = |\mathrm{Iso}(X, S)| \cdot |\mathrm{Aut}(X, S)| \cdot |\mathrm{D}|^2.\]

Since $\mathrm{Aut}(X, S)$ is a normal subgroup of $\mathrm{Iso}(X, S)$: cf. \cite[page 3]{mp1},
it is easy to see that $\mathrm{D}\rtimes \mathcal{P}(\mathrm{Aut}(X,S))$ is a normal subgroup of $\mathrm{D}\rtimes \mathrm{I}$.
The following is left as an exercise for the reader.
The group $K$ is a subgroup of $(\mathrm{D}\rtimes \mathrm{I}) \times (\mathrm{D}\rtimes \mathrm{I})$ and
its order is $|\mathrm{D}\rtimes \mathcal{P}(Sym(X))| \cdot |\mathrm{D}\rtimes \mathcal{P}(\mathrm{Aut}(X,S))|$.
\vskip15pt

Applying the orbit-stabilizer property for $r$ orbits of $H_0^G = \bigcup_{i=1}^r H_i^K$,
we get $|H_i^K| = \frac{|K|}{|K_{H_i}|}$ $(1 \leq i \leq r)$.
Since $\{ \pm(I_n, I_n) \}$ is a subgroup of $K_{H_i}$, we have
\[\frac{|K|}{|K_{H_i}|} \leq \frac{|K|}{2}.\]

This and Claim $1$ imply
\[\frac{(n-1)! (n-1)!}{|\mathrm{Aut}_{x_0}(H_0)|} \frac{|\mathrm{D}|^2}{2} \leq |H_0^{G_{x_0}}| \leq |H_0^G|=|\bigcup_{i=1}^r H_i^K| \leq \frac{|K|}{2}r.\]
By Claim $2$, we have
\[\frac{(n-1)! (n-1)!}{|\mathrm{Aut}_{x_0} (H_0)| |\mathrm{Aut}(X,S)| |\mathrm{Iso}(X,S)|} \leq r.\]
This completes the proof of Theorem \ref{thm:main3}
\qed

\section{Tables for isomorphism classes}\label{sec:table}

We obtain Table \ref{4} and Table \ref{16} using GAP. In these tables, $AS(n,m)$ means that the association scheme of order $n$ labeled by \verb+ #No. m +  in web-site \cite{hanakimi}, $(\ast)$ means the number of similarity classes of Hadamard matrices with respect to the $(X,S)$ and $(\ast\ast)$ means the lower bound given in Theorem \ref{thm:main3}. By \cite[page 277, 1.49]{cd} and \cite{KT}, Table \ref{hm} in this paper, we deal with only one Hadamard matrix up to equivalence for the cases $n=4, 8$.

\begin{table}[h]
\begin{center}
\begin{tabular}{|c||c|c|c|c|c|c|c|c|c|c|}\hline
$n$ & 4&8&12&16&20&24&28&32&36&40 \\ \hline\hline
\verb+#+ &1&1&1&5&3&60&487&$13710027$&$>15000000$&$>366000000000$ \\ \hline
\end{tabular}
\caption{The number of equivalence classes of Hadamard matrices of order $n$}\label{hm}
\end{center}
\end{table}

\begin{flushleft}
\textbf{Isomorphism classes of association schemes induced by Hadamard matrices of order 4}
\end{flushleft}

Let $\mathcal{H}$ be the set of all Hadamard matrices of order 4. Then $|\mathcal{H}| = 768$. \\
Put $H_0:=\begin{pmatrix}
1&1&1&1 \\
1&1&-1&-1  \\
1&-1&-1&1  \\
1&-1&1&-1
\end{pmatrix}$,
$H_1:=\begin{pmatrix}
1&1&1&1 \\
1&1&-1&-1  \\
1&-1&1&-1  \\
1&-1&-1&1
\end{pmatrix}$,

$H_2:=\begin{pmatrix}
1&1&1&1 \\
1&-1&-1&1  \\
1&-1&1&-1  \\
1&1&-1&-1
\end{pmatrix}$ and
$H_3:=\begin{pmatrix}
1&1&1&1 \\
1&-1&-1&1  \\
1&1&-1&-1  \\
1&-1&1&-1
\end{pmatrix}$. \\

Then we obtain the following:
\begin{enumerate}
\item $(X,S)=AS(4,1)$ \\
  $\mathcal{H}=H_0^{K_1}$ \\
  $H_0^{K_1}= H_1^{K_1} = H_2^{K_1} = H_3^{K_1}$ has 768 elements;
\item $(X,S)=AS(4,2)$ \\
  $\mathcal{H}=H_0^{K_2}\cupdot H_2^{K_2}$ \\
  $H_0^{K_2}= H_1^{K_2}$ has 256 elements and $H_2^{K_2} = H_3^{K_2}$ has 512 elements;
\item $(X,S)=AS(4,3)$ \\
  $\mathcal{H}=H_0^{K_3}\cupdot H_1^{K_3}\cupdot H_3^{K_3}$ \\
  $H_0^{K_3}= H_2^{K_3}$ has 384 elements, $H_1^{K_3}$ has 128 elements and $H_3^{K_3}$ has 256 elements;
\item  $(X,S)=AS(4,4)$ \\
  $\mathcal{H}=H_0^{K_4}\cupdot H_2^{K_4}$ \\
  $H_0^{K_4}= H_1^{K_4}$ has 256 elements and $H_2^{K_4} = H_3^{K_4}$ has 512 elements,
\end{enumerate}
where $K_i$ is the group given in (\ref{C}) defined by $AS(4,i)$.
Table \ref{4} is calculated according to Theorem \ref{thm:main3} and note that $|\mathrm{Aut}_{x_0}(H_0)|=6$ .

\begin{table}[h]
\begin{center}
\begin{tabular}{|c||c|c|c|c|c|c|}\hline
$(X,S)$&$|\mathrm{Aut}(X,S)|$&$|\mathrm{Iso}(X,S)|$&$(\ast)$& $(\ast\ast)$&$\begin{array}{c} \rm{Number \ of} \\ \rm{non-Schurian}\end{array}$& \\\hline\hline
$AS(4,1)$&24&24&1&1 &0&$AS(16,30)$ \\\hline
$AS(4,2)$&8&8&2& 1&1& $AS(16,54-55)$\\\hline
$AS(4,3)$&4&24&3& 1 &1& $AS(16,77-79)$\\\hline
$AS(4,4)$&4&8&2& 1 &2& $AS(16,89-90)$\\\hline
\end{tabular}
\caption{Isomorphism classes of association schemes induced by Hadamard matrices of order 4}\label{4}
\end{center}
\end{table}

\begin{flushleft}
\textbf{Isomorphism classes of association schemes induced by Hadamard matrices of order 8}
\end{flushleft}
Let $H$ be the Hadamard matrix which is obtained from $PG(2,2)$. Then $|\mathrm{Aut}_{x_0}(H)|=168$ (see \cite[Theorem 4]{kantor}).

Table \ref{16} is calculated according to Theorem \ref{thm:main3}.

\begin{table}[h]
\begin{center}
\begin{tabular}{|c||c|c|c|c|c|c|}\hline
$(X,S)$ & $|\mathrm{Aut}(X,S)|$&$|\mathrm{Iso}(X,S)|$ & $(\ast)$ &$(\ast\ast)$&$\begin{array}{c} \rm{Number \ of} \\ \rm{ non-Schurian}\end{array}$& \\ \hline\hline
$AS(8,1)$ &40320&40320& 1 & 1 &0&$AS(32,53)$
 \\  \hline
$AS(8,2)$ &384&384& 17 & 1 &17&$AS(32,4031-4047)$
 \\ \hline
$AS(8,3)$ &1152&1152& 6 & 1&6&$AS(32,4117-4122) $
\\ \hline
$AS(8,4)$ &128&128& 56 & 5   &55&  $AS(32,4529-4584)$
\\ \hline
$AS(8,5)$ &48&48& 218 & 33&217& $AS(32,4646-4863) $
\\ \hline
$AS(8,6)$ &24&48& 104 & 66&104& $AS(32,5083-5186)$
\\ \hline
$AS(8,7)$ &32&192& 130 &13&129& $AS(32,5473-5602)$
 \\ \hline
$AS(8,8)$ &32&64& 143 & 37&143& $AS(32,5745-5887)$
\\ \hline
$AS(8,9)$ &64&384& 37 & 4&37& $AS(32,6068-6104) $
\\ \hline
$AS(8,10)$ &16&32& 337 & 148&337&$AS(32,6105-6441)$
\\ \hline
$AS(8,11)$ &64&128& 60 & 10&59&$AS(32,6884-6943) $
\\ \hline
$AS(8,12)$ &16&32& 247 & 148&247&$AS(32,6944-7190) $
\\ \hline
$AS(8,13)$ &16&64& 377 & 74&376&$AS(32,7785-8161) $
\\ \hline
$AS(8,14)$ &16&64& 319 & 74&319&$AS(32,8598-8916)$
\\ \hline
$AS(8,15)$ &16&64& 286 & 74&286&$AS(32,9288-9573) $
\\ \hline
$AS(8,16)$ &16&64& 179 & 74&179&$AS(32,9892-10070)$
 \\ \hline
 $C_2 \times C_2\times C_2$&8&1344& 65 &8 &64& $AS(32,11168-11232)$
 \\ \hline
$D_4$ &8&64& 441 & 148&441&$ AS(32,11305-11745) $
\\ \hline
$C_4\times C_2$&8&64& 442 &148 &442& $AS(32,12191-12632)$
 \\ \hline
$Q_8$ &8&192& 138 & 50&138&$ AS(32,13083-13220) $
\\ \hline
$C_8$ &8&32& 462 & 296&462& $AS(32,13221-13682) $
\\ \hline
\end{tabular}
\caption{Isomorphim classes of association schemes induced by Hadamard matrices of order 8}\label{16}
\end{center}
\end{table}

\begin{flushleft}
\textbf{Isomorphism classes of association schemes induced by Hadamard matrices of order $2^n$ and a cyclic group of order $2^n$}
\end{flushleft}

Let $(X,S)=\mathfrak{I}(C_{2^n})$ (for the definition of $\mathfrak{I}(C_{2^n})$, see \cite{zies} page 177).
Then $\mathrm{Aut}(X,S)=R(C_{2^n})$ and $\mathrm{Iso}(X,S)=R(C_{2^n}) \rtimes \mathrm{Aut}(C_{2^n})$, where $R(C_{2^n})$ is the group
$(\{f_a:C_{2^n}\rightarrow C_{2^n}\mid  a\in C_{2^n}\},\circ)$ and $f_a(x)=xa$. Therefore, $|\mathrm{Aut}(X,S)| =2^n$
and $| \mathrm{Iso}(X,S)|= 2^{2n-1}$.

Let $H$ be a Hadamard matrix induced by $PG(2, n-1)$, which is called a Sylvester matrix. Then $|\mathrm{Aut}_{x_0}(H)| = (2^n -2^0)(2^n-2^1)\cdots (2^n-2^{n-1})$.
Therefore, by Theorem \ref{thm:main3}, there are at least $\frac{(2^n-1)! (2^n-1)!}{2^{3n}(2^n -2^0)(2^n-2^1)\cdots (2^n-2^{n-1})}$ isomorphism classes of association schemes which are obtained by $\mathfrak{I}(C_{2^n})$ and $H$.

\section*{Acknowledgement}
The authors would like to thank anonymous referees for their careful reading and valuable comments.

\bibstyle{plain}

\end{document}